\newtheorem{thm}{Theorem}
\newtheorem{lem}[thm]{Lemma}
\theoremstyle{definition}
\title{Ramsey numbers of connected 4-clique matchings}
\author{Krit Kanopthamakun}
\address{Division of Computational Science, Faculty of Science, Prince of Songkla University, Thailand}
\email{kritmpsu@hotmail.com}
\author{Panupong Vichitkunakorn}
\address{Division of Computational Science, Faculty of Science, Prince of Songkla University, Thailand}
\email{panupong.v@psu.ac.th}
\begin{document}
\maketitle

\begin{abstract}
In this paper, we determine the exact value of the $2$-edge-coloring Ramsey number of a connected $4$-clique matching $c(nK_4)$, which is a set of connected graphs containing an $nK_4$ is $13n-3$ for any positive integer $n \geq 3$.
This is an extension of the result by Roberts (2017), which is proved only for $n\geq 18$.
We also show that the result still holds when $n=2$ provided that $R_2(2K_4) \leq 23$.
\end{abstract}

\section{Introduction and Preliminaries}

Given graphs $G_1,G_2,\ldots,G_k$, the \emph{(graph) Ramsey number} $R(G_1,\ldots,G_k)$ is the smallest integer $n$ such that every $k$-edge-coloring of $K_n$ contains a copy of $G_i$ in color $i$ for some $i$. When $G_i = G$ for all $i$, we write $R_k(G)$ for $R(G_1,\ldots,G_k)$. When $G_i$ is a complete graph $K_{n_i}$ for all $i$, we abbreviate the notation to $R(n_1,\ldots,n_k)$.

Until now, no exact formula is known for general values of $n_i$ and few $2$-color Ramsey numbers have been computed: $R(3,3)=6, R(3,4)=9, R(3,5)=14, R(3,6)=18, R(3,7)=23, R(3,8)=28, R(3,9)=36, R(4,4)=18$ and $R(4,5)=25$. See the survey by Radziszowski \cite{radz} and references therein. 

There is a lot of research on Ramsey numbers for non-complete graphs such as complete bipartite graphs, cycles, paths, stars, etc \cite{radz}.
For disconnected graphs, there are only few results. For example, the exact formula of $R(n_1P_2,n_2P_2,\ldots,n_kP_2)$ was shown in \cite{coclo}, and was extended \cite{lorp} to $R(K_m,n_1P_2,n_2P_2,\ldots,n_kP_2)$. Later, there are more results on general cases of disconnected graphs, such as multiple copies of complete graphs \cite{lorse,burrer,coclo,lorra}.
For $R_2(nK_r)$, the exact formula for any $n \geq 2$ is only known when $r$ is either $2$ \cite{coclo,lorp} or $3$ \cite{burrer}. For $r = 4$, \cite{burr} gave the formula when $n$ is sufficiency large.

Let $G$ be a disconnected graph and $c(G)$ be the set of all connected graphs containing $G$. Then $R_k(c(G))$ is the least integer $n$ such that any $k$-edge-coloring on $K_n$ contains a monochromatic copy of a graph in $c(G)$, see \cite{gyasa,rob}.
Cockayne and Lorimer first gave a result for a $2$-color connected matching, $R_2(c(nK_2))$, \cite{coclo}.
Gy{\'a}rf{\'a}s and S{\'a}rk{\"o}zy determined the exact value for a connected triangle matching \cite{gyasa}.
Later Roberts gave a general result for a connected clique matching \cite{rob}, which is shown the following theorem.

\begin{thm} \label{ram} \textup{\cite{rob}} 
	For $r \geq 4$ and $n \geq R(r,r)$ we have
	\[R_2(c(nK_r)) = (r^2-r+1)n-r+1.\]
\end{thm}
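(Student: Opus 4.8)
The plan is to establish the two matching bounds $R_2(c(nK_r))\ge (r^2-r+1)n-r+1$ and $R_2(c(nK_r))\le (r^2-r+1)n-r+1$. For the lower bound I would exhibit a $2$-colouring of $K_N$ with $N=(r^2-r+1)n-r$ and no monochromatic connected $nK_r$: partition the $N$ vertices into $r$ parts, one of order $n-1$ and $r-1$ of order $rn-1$ (these orders sum to $N$ since $(n-1)+(r-1)(rn-1)=(r^2-r+1)n-r$), colour every edge inside a part with colour $1$, and every edge joining distinct parts with colour $2$. Then colour $1$ is a disjoint union of cliques of orders $n-1$ and $rn-1$; as $K_m$ contains $nK_r$ only when $m\ge rn$, and $n-1<rn$ while $\lfloor (rn-1)/r\rfloor=n-1<n$, no colour-$1$ component contains $nK_r$. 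Colour $2$ is a connected complete $r$-partite graph, and each of its copies of $K_r$ uses exactly one vertex per part, so its $K_r$-matching number is the smallest part order $n-1<n$. Hence $K_N$ has no monochromatic connected $nK_r$, giving $R_2(c(nK_r))\ge N+1$; this construction needs no hypothesis on $n$.

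For the upper bound, put $N=(r^2-r+1)n-r+1$, assume $n\ge R(r,r)$, fix a $2$-colouring of $K_N$, and suppose it has no monochromatic connected $nK_r$. One colour class is connected and spanning (if colour $1$ is disconnected, all edges between two of its components lie in colour $2$), so assume colour $2$ is; it then suffices to produce either $n$ pairwise disjoint colour-$2$ copies of $K_r$, or $n$ pairwise disjoint colour-$1$ copies of $K_r$ inside a single colour-$1$ component, since in either case the connected host graph lets one link these copies into a monochromatic connected $nK_r$ — note that one cannot simply invoke $R(nK_r,nK_r)$, which for $r\ge3$ is strictly smaller than our target, precisely because a colour-$1$ $nK_r$ spread across several colour-$1$ components is useless. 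Let $\mathcal M$ be a maximum colour-$2$ $K_r$-matching; then $|\mathcal M|\le n-1$, so the set $U$ of vertices missed by $\mathcal M$ has $|U|=N-r|\mathcal M|\ge(r-1)^2n+1$, and colour $2$ spans no $K_r$ inside $U$. A colour-$1$ independent set in $U$ is a colour-$2$ clique in $U$, so the colour-$1$ graph on $U$ has independence number at most $r-1$, hence at most $r-1$ components; if it has exactly $k$ components, then colour $2$ is $K_{r-k+1}$-free on each of them, since a colour-$2$ copy of $K_{r-k+1}$ inside one component together with one vertex from each of the other $k-1$ would form a colour-$2$ $K_r$ inside $U$. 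As $n\ge R(r,r)$ forces $|U|\ge R(r,r)$, repeatedly deleting colour-$1$ copies of $K_r$ from the largest such component (which has at least $|U|/k$ vertices) while at least $R(r,r-k+1)$ vertices survive — each surviving set of that size contains a monochromatic, hence colour-$1$, $K_r$ — produces a colour-$1$ $K_r$-matching inside a single colour-$1$ component of size at least about $|U|/(kr)-R(r,r-k+1)/r$; by routine estimates using $n\ge R(r,r)$ this exceeds $n$ whenever $k\le r-2$, settling those cases.

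What remains is the case $k=r-1$, where colour $2$ is edgeless on each component, i.e.\ the colour-$1$ graph on $U$ is a disjoint union of $r-1$ colour-$1$ cliques $D_1,\dots,D_{r-1}$ of near-equal order $\approx(r-1)n$ that are pairwise joined entirely in colour $2$ — exactly mimicking the extremal colouring, with the largest clique $D_1$, of order at least $(r-1)n+1$, too small on its own to contain $nK_r$. Here I would enlarge the colour-$1$ component containing $D_1$ by absorbing vertices of $\bigcup\mathcal M$: any such vertex with at least $r-1$ colour-$1$ neighbours in $D_1$ spans, with $r-1$ of them, a colour-$1$ $K_r$ meeting $D_1$; the remaining vertices of $\bigcup\mathcal M$ have almost all their $D_1$-edges in colour $2$, and since $D_1,\dots,D_{r-1}$ are pairwise colour-$2$-complete, too many such vertices would, after rearranging $\mathcal M$ using vertices of $U$, yield more than $|\mathcal M|$ pairwise disjoint colour-$2$ copies of $K_r$, contradicting maximality — so enough absorptions remain to build an $rn$-vertex colour-$1$-connected set carrying $n$ disjoint colour-$1$ copies of $K_r$. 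I expect this absorption step, and the bookkeeping that tracks how the single extra vertex in $N$ is consumed and that forces $n\ge R(r,r)$, to be the main obstacle; the lower bound and the reduction to $U$ are routine by comparison.
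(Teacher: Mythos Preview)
This theorem is cited from Roberts and not proved in the paper; the paper only records that the lower bound follows from a construction of Roberts (or Burr), and that in Roberts' upper-bound argument the hypothesis $n\ge R(r,r)$ is what guarantees enough vertices to produce at least $r-1$ blue connected components. The paper's own detailed argument is for $r=4$ only (Theorem~\ref{main}), and its method differs from yours: with red connected, it bounds the number of disjoint red $K_4$'s by $n-1$, greedily extracts monochromatic $K_4$'s to force at least $2n-3+\lfloor(n-1)/4\rfloor$ disjoint \emph{blue} $K_4$'s, observes that these are spread over several blue components (each holding at most $n-1$ of them), groups the blue components into two or three blocks via Lemma~\ref{lem:2part}, and then assembles red $K_4$'s across the blocks, using that all inter-block edges are red together with the exact values of $R(mK_2,(k{+}1)K_4)$ and $R(mK_3,(k{+}1)K_4)$. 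So both Roberts (as described here) and the paper work with the component structure of the \emph{disconnected} colour and build $K_r$'s in the connected colour across those components; you instead delete a maximum $K_r$-matching in the \emph{connected} colour and analyse the other colour on the leftover set $U$.

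Your sketch, while plausible, has real gaps that you yourself flag. The ``routine estimates'' for $k\le r-2$ amount to inequalities of the shape $(r-1)^2n/k \ge rn+R(r,r-k+1)$; for $k=r-2$ this becomes roughly $n\ge(r-2)R(r,3)-(r-1)^2$, which does follow from $n\ge R(r,r)$ but only via a nontrivial comparison of diagonal and off-diagonal Ramsey numbers that you do not supply. More seriously, the $k=r-1$ absorption is only heuristic: you must show that either enough vertices of $\bigcup\mathcal M$ have at least $r-1$ colour-$1$ neighbours in $D_1$ to complete $n$ disjoint colour-$1$ $K_r$'s there, or else the remaining vertices can be recombined with vertices from $D_2,\dots,D_{r-1}$ into a strictly larger colour-$2$ $K_r$-matching, and you give no mechanism that preserves disjointness. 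This is exactly the bottleneck the blue-component approach sidesteps.
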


The lower bound of Theorem \ref{ram} can be established using a result by Burr \cite{burr2} or a constructive proof given by Roberts \cite{rob}, both of which hold for all $r \geq 3$ and $n \geq 2$.

In the proof of Theorem \ref{ram}, the condition of $n \geq R(r,r)$ is necessary as it ensures that any $2$-edge-coloring on the complete graph of $(r^2-r+1)n-r+1$ vertices has enough vertices to form at least $r-1$ blue connected components.
However, when $n < R(r,r)$, the number of blue connected components can be fewer than $r-1$.
To address this issue, we introduce a new method for handling smaller values of $n$.

In this paper, we are only interested in the case when $r=4$.
Theorem \ref{ram} showed that $R_2(c(nK_4)) = 13n-3$ when $n \geq R(4,4) = 18$.
The main goal of this paper is to extend the result of Theorem \ref{ram} from $n \geq 18$ to $n \geq 3$. 
We also show that the formula still holds when $n=2$ if $R_2(2K_4) \leq 23$.

\section{Main results}\label{sec:main}

Now that we already obtained the lower bound, we turn our attention to proving the upper bound.
The next lemma is a main tool for proving Theorem~\ref{main}.

\begin{lem} \label{lem:2part}
	Let $n \geq 2$ and $k \geq 2$ be positive integers with $k \geq 2n-3 + \left \lfloor{\frac{n-1}{4}}\right \rfloor$. 
	If $k = a_1+a_2+\dots+a_t$ where $n-1 \geq a_1 \geq a_2 \geq \dots \geq a_t \geq 1$, 
	then $a_1,a_2,\ldots,a_t$ can be combined into three parts with each part being at least $\left \lceil{\frac{n}{4}}\right \rceil$ or two parts with each part being at least $n-2$.
\end{lem}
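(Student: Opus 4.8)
The plan is to record the convenient identity $\lceil n/4\rceil=\lfloor(n-1)/4\rfloor+1$, so that with $m:=\lceil n/4\rceil$ the hypothesis reads $k\ge 2n+m-4$, and then to split into a few cases governed by the two largest summands $a_1\ge a_2$. In each case we either set aside a large summand as a group of its own, or greedily accumulate small summands into a group until it first reaches a target value; the bound $a_i\le n-1$ — and, in the main case, the sharper $a_i\le n-3$ — controls how far such a greedy step overshoots, while the hypothesis on $k$ keeps the leftover group large enough. Some quick reductions come first: $n=2$ is immediate since $n-2=0$; for $n\ge3$ the bound $a_i\le n-1$ forces $t\ge2$, and when moreover every $a_i\le n-3$ it forces $t\ge3$ (from $t(n-3)\ge k\ge2n-3$); and for $n\in\{3,4\}$, if $a_1<n-2$ then every $a_i=1$ and $t=k\ge5$, so three singleton groups of value $1=m$ work, while otherwise the case $a_1\ge n-2$ below applies. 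Thus we may assume $n\ge5$, hence $m\ge2$.

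If $a_1\ge n-2$, we take the two groups $\{a_1\}$ and $\{a_2,\dots,a_t\}$, the latter non-empty as $t\ge2$; their values are $a_1\ge n-2$ and $k-a_1\ge(2n+m-4)-(n-1)=n+m-3\ge n-2$, so the two-group alternative holds. Assume henceforth $a_1\le n-3$, so every summand is at most $n-3$, and aim for three groups each of value at least $m$. If $a_2\ge m$, the partition $\{a_1\},\{a_2\},\{a_3,\dots,a_t\}$ works: the first two groups have value $\ge a_2\ge m$, and the third has value $k-a_1-a_2\ge k-2(n-3)\ge m+2$.

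It remains to treat $a_2<m$, in which case $a_2,\dots,a_t$ are all at most $m-1$. When $a_1\ge m$ we take $\{a_1\}$ as the first group and build the second group out of $a_2,\dots,a_t$ by adding summands one at a time, stopping the moment its value reaches $m$ (so its value is then at most $(m-1)+(m-1)=2m-2$), the leftover summands forming the third group; when $a_1<m$, all summands are at most $m-1$ and we build all three groups greedily in this same way, one after another. A short check using $t\ge3$ and $k\ge2n+m-4$ shows that in each situation the earlier groups can reach $m$ while leaving a non-empty last group, whose value is at least $k-(n-3)-(2m-2)$ when $a_1\ge m$ and at least $k-2(2m-2)$ when all $a_i<m$.

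The crux, and the place where the exact additive constant is consumed, is this final case. Substituting $k\ge2n+m-4$, the two lower bounds above become $n-m+1$ and $2n-3m$, so the claim reduces to $n-m+1\ge m$ and $2n-3m\ge m$, that is $n+1\ge2\lceil n/4\rceil$ and $n\ge2\lceil n/4\rceil$, which hold for all $n\ge1$ and all $n\ge3$ respectively once one uses $\lceil n/4\rceil\le(n+3)/4$. The only genuine design decision in the argument is the nested case split on the sizes of $a_1$ and $a_2$ relative to the thresholds $n-2$ and $m$: this is exactly what lets one toggle between ``a large summand is its own group'' and ``small summands get packed greedily''. We expect the main obstacle to be the bookkeeping in this last case — checking both that the greedy steps do not exhaust the summands prematurely and that the leftover group still clears the threshold $m$ — rather than any conceptual difficulty.
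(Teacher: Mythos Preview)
Your argument is correct and follows essentially the same case analysis as the paper: split on whether $a_1\ge n-2$ or $a_1\le n-3$, and in the latter case split further on how $a_1,a_2$ compare to $m=\lceil n/4\rceil$. The mechanics differ only slightly---where the paper balances three parts so that pairwise differences are below $m$ (its Case~1.3), you instead pack groups greedily to a value in $[m,2m-2]$ and bound the leftover; both devices work and your arithmetic checks out.

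One substantive difference is worth flagging. When $a_1\ge n-2$ you immediately take the two-part option $\{a_1\},\{a_2,\dots,a_t\}$, whereas the paper works harder in this regime (its Cases~2.1--2.3) to still produce three parts of size at least $\lceil n/4\rceil$ whenever possible, falling back to two parts only when $a_1,a_2\ge n-2$ and $a_3+\cdots+a_t<\lceil n/4\rceil$. That extra effort is not needed for the lemma as stated, so your proof is complete; but the paper records this sharper conclusion as a remark immediately after the lemma and then \emph{uses} it in Case~2 of the main theorem to deduce $k\le 2n-3+\lceil n/4\rceil$ whenever the three-part alternative fails. If you intend to follow the paper's proof of the main theorem, you will need to supply that refinement separately.
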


\begin{proof}
	We first consider two cases for $a_1$.
	
	\noindent \textbf{Case 1: $a_1 \leq n-3$.} 
	We consider three subcases.
	
	\textbf{Case 1.1:} $a_1,a_2 \geq \left \lceil{\frac{n}{4}} \right \rceil$.
	Then $a_3+\cdots+a_t \geq 3+\left \lfloor{\frac{n-1}{4}}\right \rfloor > \left \lceil{\frac{n}{4}} \right \rceil$.
	Thus, we can combine $a_3,a_4,\ldots,a_t$ into a single part.
	Hence, we have three parts where each part is at least $\left \lceil {\frac{n}{4}} \right \rceil$.
	
	\textbf{Case 1.2:} $a_1 \geq \left \lceil{\frac{n}{4}} \right \rceil$ but $a_2 < \left \lceil{\frac{n}{4}} \right \rceil$.
	Then $a_2+\cdots+a_t \geq n+\left \lfloor{\frac{n-1}{4}}\right \rfloor > n-1$.
	Since $a_i < \left \lceil{\frac{n}{4}} \right \rceil$ for $2 \leq i \leq t$, those $a_2,\ldots,a_t$ can be combined into two parts where each part is at least $\left \lfloor{\frac{n-1}{4}}\right \rfloor+1 = \left \lceil{\frac{n}{4}}\right \rceil$.
	So, we obtain three parts where each part is at least $\left \lceil{\frac{n}{4}} \right \rceil$.
	
	\textbf{Case 1.3:} $a_1 < \left \lceil{\frac{n}{4}} \right \rceil$.
	Then $n \geq 5$. We will combine $a_1,a_2,\ldots,a_t$ into three parts, says $x,y$ and $z$,
	such that the difference between any two parts is less than $\left \lceil{\frac{n}{4}} \right \rceil$.
	This is possible because $a_i < \left \lceil{\frac{n}{4}} \right \rceil$.
	Next, we suppose that $x \geq y \geq z$. 
	We have
	\begin{equation*}
		3z + 2\left(\left \lceil{\frac{n}{4}} \right \rceil-1\right) \geq x+y+z \geq 2n-3+\left \lfloor{\frac{n-1}{4}}\right \rfloor.
	\end{equation*}
	Then we obtain
	\begin{equation*}
		z \geq \frac{2n-1}{3}+\frac{1}{3}\left \lfloor{\frac{n-1}{4}}\right \rfloor-\frac{2}{3} \left \lceil{\frac{n}{4}} \right \rceil \geq \left \lceil{\frac{n}{4}} \right \rceil.
	\end{equation*}
	Again, we get three parts where each part is at least $\left \lceil{\frac{n}{4}} \right \rceil$.
	
	\noindent \textbf{Case 2: $a_1 \geq n-2$.}     
	Again, we will split it into three subcases.
	
	\textbf{Case 2.1:} $a_2,a_3 \geq \left \lceil{\frac{n}{4}} \right \rceil$.
	Then we get three parts of size at least $\left \lceil{\frac{n}{4}} \right \rceil$.
	
	\textbf{Case 2.2:} $a_2 \geq \left \lceil{\frac{n}{4}} \right \rceil$ but $a_3 < \left \lceil{\frac{n}{4}} \right \rceil$.
	We consider the value of $a_3+a_4+\cdots+a_t$. If it is at least $\left \lceil{\frac{n}{4}} \right \rceil$, then combining $a_3,a_4,\ldots,a_t$ into a single part.  
	So, we obtain three parts of size at least $\left \lceil{\frac{n}{4} } \right \rceil$.
	Otherwise, $a_1+a_2 > 2n-3+\left \lfloor{\frac{n-1}{4}}\right \rfloor-\left \lceil{\frac{n}{4}} \right \rceil=2n-4$.
	Then $a_2 \geq n-2$.
	Combining $a_2,a_3,\ldots,a_t$ into a single part, we yield two parts of size at least $n-2$.
	
	\textbf{Case 2.3:} $a_2 < \left \lceil{\frac{n}{4}} \right \rceil$.
	Then $n \geq 5$ and $a_2+a_3+\cdots+a_t \geq n-2+\left \lfloor{\frac{n-1}{4}}\right \rfloor$. 
	When $5 \leq n \leq 8$, then $k \geq 8$ and $a_2+a_3+\cdots+a_t \geq n-1$.
	But, $a_i < \left \lceil{\frac{n}{4}} \right \rceil = 2$ for all $2 \leq i \leq t$.
	Thus, $5 \leq n \leq t$.
	Therefore, we can combine $a_2,a_3,\ldots,a_t$ into two parts, where each part is at least $2 = \left \lceil{\frac{n}{4}} \right \rceil$.
	Next, suppose that $n \geq 9$.
	We have $a_2+a_3+\cdots+a_t > n-1$.
	Since $a_i < \left \lceil{\frac{n}{4}} \right \rceil$ for $2 \leq i \leq t$, those $a_2,a_3,\ldots,a_t$ can be combined into two parts where each part is at least $\left \lceil{\frac{n}{4}} \right \rceil$.
	Therefore, we have three parts where each part is at least $\left \lceil{\frac{n}{4}} \right \rceil$.
	
	From all cases, we can combine $a_1,a_2,\ldots,a_t$ into three parts with each part being at least $\left \lceil{\frac{n}{4}} \right \rceil$ or into two parts with each part being at least $n-2$.
\end{proof}

Note that, by Lemma \ref{lem:2part}, we can combine $a_1,a_2,\ldots,a_t$ into three parts where each part is at least $\left \lceil{\frac{n}{4}}\right \rceil$, except when $a_1,a_2 \geq n-2$ and $a_3+a_4+\cdots+a_t < \left \lceil{\frac{n}{4}}\right \rceil$.

The following theorems are additional tools for proving the main theorem.

\begin{thm} \label{thm:mK2} \textup{\cite{fau}}
	Let $G$ be a $k$-regular graph with $n$ vertices, where $1 \leq k \leq 6$ and $m \geq 1$. Then
	\begin{equation*}
		R(mK_2,G)=\max\{n+2m-\alpha(G)-1,n+m-1\},
	\end{equation*}
	where $\alpha(G)$ is an independent number of $G$.
\end{thm}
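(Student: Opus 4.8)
The plan is to prove matching bounds for $R(mK_2,G)$. Write $\alpha:=\alpha(G)$ and $N:=\max\{n+2m-\alpha-1,\ n+m-1\}$, so that the goal is $R(mK_2,G)=N$.

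For the lower bound $R(mK_2,G)\ge N$ I would exhibit two $2$-colourings of $K_{N-1}$ with no red $mK_2$ and no blue $G$, one for each term of the maximum. If $N=n+m-1$, split the $n+m-2$ vertices as $A\cup B$ with $|A|=m-1$, colour every edge meeting $A$ red and every edge inside $B$ blue: the red graph is covered by $A$, so its matching number is at most $m-1$, and the blue graph lies on $|B|=n-1$ vertices. If $N=n+2m-\alpha-1$, split the $n+2m-\alpha-2$ vertices as $A\cup B$ with $|A|=2m-1$, colour $A$ red-complete and all remaining edges blue: the red $K_{2m-1}$ has matching number $m-1$, while in any blue copy of $G$ the vertices landing in $A$ induce an independent set of $G$, so at most $\alpha$ of them are there, forcing at least $n-\alpha$ of them into $B$, which is impossible as $|B|=n-\alpha-1$. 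Both constructions are valid for an arbitrary graph $G$ on $n$ vertices.

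For the upper bound, take a $2$-colouring of $K_N$ with no red $mK_2$, let $M$ be a maximum red matching, $s:=|M|\le m-1$, let $U$ be the $2s$ vertices it covers and $W:=V(K_N)\setminus U$; maximality makes $W$ a blue clique, of size $|W|=N-2s$. If $|W|\ge n$, then $K_{|W|}\supseteq G$ in blue and we are done, so assume $q:=n-|W|\ge1$; using $s\le m-1$ together with the two expressions for $N$ one checks $1\le q\le\min\{s,\alpha-1\}$ and $|W|\ge\max\{n-\alpha+1,\ n-m+1\}$. The structural heart of the argument is that maximality of $M$ forces almost all of $U$ to be blue-complete to $W$: for each edge $xy\in M$ a matching-augmentation argument rules out $x$ and $y$ having red neighbours at two distinct vertices of $W$, so in the worst case $x$ and $y$ each have exactly one red neighbour in $W$ and it is the same vertex. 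Picking one endpoint of each edge of $M$ yields a set $U_0\subseteq U$ with $|U_0|=s\ge q$ such that every vertex of $U_0$ is blue to all of $W$ except possibly one ``exceptional'' vertex. To finish, choose an independent set $X$ of $G$ with $|X|=q$ (possible since $q\le\alpha-1<\alpha$), map $G-X$ onto $W$ by any bijection (legitimate, $W$ being a blue clique) and map $X$ injectively into $U_0$; the only obstructions come from the at most $q$ exceptional vertices, and because $G$ is $k$-regular each $x\in X$ has exactly $k\le6$ neighbours, so $x$ merely needs its $k$ neighbour-images to miss one prescribed vertex of $W$.

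The main obstacle is making this last embedding rigorous: it becomes a Hall-type assignment problem — give each exceptional vertex $w_0$ a vertex of $G-X$ to sit on it that is non-adjacent in $G$ to all $X$-vertices tied to $w_0$ — and it requires the numerical slack $|W|\ge k+1$ plus a count showing the forbidden images can be dodged simultaneously. This is exactly where $k\le 6$ and $k$-regularity enter: regularity bounds the parameters of $G$ (for instance $\alpha\le n-\nu(G)$ with $\nu(G)\ge kn/(2(k+1))$ via a proper edge colouring, hence $|W|\ge n-\alpha+1\ge kn/(2(k+1))+1$), while $k\le 6$ keeps each vertex's demand of $k$ blue neighbours small enough that these bounds dominate the at most $q\le\alpha-1$ exceptional constraints. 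I would organise the verification by splitting on the size of $m$ — the range $m\le n-k$, where $|W|\ge n-m+1\ge k+1$ is immediate, versus the complementary range, where one invokes the regularity bound on $\alpha$ — echoing the small/large case analysis already used in Lemma~\ref{lem:2part}; the residual very small cases ($n$ close to $k+1$, so $G$ is a dense regular graph of bounded order) are treated directly.
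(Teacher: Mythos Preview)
The paper does not prove Theorem~\ref{thm:mK2}; it is quoted from \cite{fau} and used as a black box, so there is no ``paper's own proof'' to compare against.

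Your lower bound constructions are correct and standard. The upper bound skeleton is also the right one: take a maximum red matching $M$, observe that $W$ is a blue clique, reduce to the case $|W|=n-q$ with $1\le q\le\min\{s,\alpha-1\}$, and try to embed $G$ with an independent $q$-set $X$ placed in $U_0$ and $G-X$ placed in $W$. The augmenting-path observation giving each $u\in U_0$ at most one red neighbour in $W$ is also correct.

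The genuine gap is the final embedding, which you yourself flag as ``the main obstacle'' but do not actually carry out. The Hall/greedy strategy you outline does not go through on the numerics you record: with $q\le\alpha-1$ and $|W|=n-q$, the crude bound for the forbidden images gives a requirement like $n-q>k|S_w|$ for each exceptional $w$, and already when all $q$ vertices of $X$ share a single exceptional vertex this asks for $n>(k+1)q$, which fails badly for bipartite $k$-regular $G$ (where $\alpha=n/2$ and $q$ can be close to $n/2$). So more is needed than a counting/Hall argument on the data you have isolated: one must simultaneously exploit the freedom in choosing the independent set $X$, the freedom in selecting which $q$ of the $s$ endpoints in $U_0$ to use, and a finer structural statement about red edges between the two endpoints of each matching edge and $W$. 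This is exactly the part of the original proof that makes the restriction $k\le 6$ necessary and is where the real work lies; your proposal identifies the location of the difficulty but does not supply the missing argument.
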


In particular, when $G=(k+1)K_4$, where $k \geq 1$,
it can be explicitly written as
$$R\left(mK_2, (k+1)K_4\right) = \begin{cases} 3k+2m+2, & k< m-1, \\ 4k+m+3, & k\geq m-1,\end{cases}$$
where both cases are equal when $k=m-1$.

We find it convenient to use the following simplified alternative to Theorem \ref{thm:mK2}.

\begin{lem} \label{lem:fu}
	Let $G$ be a $2$-edge-coloring on $K_{4k+u}$ with red and blue where $k \geq 1$ and $u \geq 0$.
	Suppose that $G$ contains at most $k$ disjoint blue $K_4$.
	Then there exists at least $f_k(u)$ red copies of $K_2$ where
	$$f_k(u) = \begin{cases}
		0, & u \leq 3, \\
		u-3, & 4 \leq u \leq k+3, \\
		\left \lfloor{\frac{u+k-2}{2}}\right \rfloor, & u \geq k+4.
	\end{cases}$$ 
\end{lem}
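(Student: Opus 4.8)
The plan is to derive the lemma directly from Theorem~\ref{thm:mK2} applied to the graph $H := (k+1)K_4$. First I would record that $H$ has $4(k+1) = 4k+4$ vertices, is $3$-regular (so Theorem~\ref{thm:mK2} applies, since $1 \le 3 \le 6$), and has independence number $\alpha(H) = k+1$ (because $\alpha(K_4)=1$ and $\alpha$ is additive over disjoint unions). Plugging these values into Theorem~\ref{thm:mK2} gives
\[
R(mK_2,(k+1)K_4) = \max\bigl\{(4k+4)+2m-(k+1)-1,\ (4k+4)+m-1\bigr\} = \max\{3k+2m+2,\ 4k+m+3\},
\]
which is exactly the explicit special case recorded right after Theorem~\ref{thm:mK2}.

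Next I would reformulate the hypothesis: the coloring $G$ on $K_{4k+u}$ containing at most $k$ pairwise disjoint blue $K_4$'s is the same as $G$ containing no blue copy of $(k+1)K_4$. Hence, by the definition of the Ramsey number, for every positive integer $m$ with $R(mK_2,(k+1)K_4) \le 4k+u$ the coloring $G$ must contain a red $mK_2$, i.e.\ $m$ pairwise disjoint red edges. So it suffices to show that $m = f_k(u)$ satisfies $R(f_k(u)K_2,(k+1)K_4) \le 4k+u$; the case $f_k(u)=0$ is vacuous, and in the remaining cases $f_k(u)\ge 1$, so this reduces to checking the two inequalities
\[
3k+2f_k(u)+2 \le 4k+u \qquad\text{and}\qquad 4k+f_k(u)+3 \le 4k+u .
\]

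I would then verify these in the two nontrivial ranges of $f_k(u)$. For $4 \le u \le k+3$ we have $f_k(u) = u-3$: the second inequality is the equality $4k+u \le 4k+u$, and the first reduces to $u \le k+4$, which holds. For $u \ge k+4$ we have $f_k(u) = \lfloor (u+k-2)/2 \rfloor \le (u+k-2)/2$: the first inequality follows from $2f_k(u) \le u+k-2$, and the second from the fact that $(u+k-2)/2 \le u-3 \iff u \ge k+4$. This establishes the desired red matching of size $f_k(u)$, proving the lemma.

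The whole argument is essentially bookkeeping: all the content is in matching the piecewise definition of $f_k(u)$ to the inequality $R(mK_2,(k+1)K_4) \le 4k+u$. The only point requiring a little care is the arithmetic with the floor in the range $u \ge k+4$, together with the interaction with the switch between the two branches of Theorem~\ref{thm:mK2} at $m = k+1$; by bounding the maximum of the two linear expressions simultaneously rather than splitting on that threshold, this is avoided, so I anticipate no genuine obstacle.
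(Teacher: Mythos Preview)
Your proof is correct and follows essentially the same approach as the paper: both apply Theorem~\ref{thm:mK2} to $(k+1)K_4$ with $m=f_k(u)$ and verify $R(f_k(u)K_2,(k+1)K_4)\le 4k+u$. The only cosmetic difference is that the paper first identifies which branch of the $\max$ is active (by checking whether $m\le k$ or $m\ge k+1$), whereas you bound both branches at once; this is a harmless shortcut and the arithmetic checks out.
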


\begin{proof}
	The first case when $u \leq 3$ is obvious.
	For $4 \leq u \leq k+3$. Consider $m = u-3$. Then $m \leq k$.
	By Theorem \ref{thm:mK2}, $R(mK_2,(k+1)K_4)=4k+m+3=4k+u=n(G)$.
	If $u \geq k+4$, then consider $m =  \left \lfloor{\frac{u+k-2}{2}}\right \rfloor$.
	Thus $m \geq \frac{u+k-3}{2} \geq \frac{2k+1}{2} = k+\frac{1}{2}$.
	Since $m \in \mathbb{Z}$, $k \leq m-1$.
	Again by Theorem \ref{thm:mK2}, $R(mK_2,(k+1)K_4)=3k+2m+2 \leq 3k+2\left(\frac{u+k-2}{2}\right)+2 = 4k+u = n(G)$. 
	From all cases, $G$ contains a red matching of size $m = f_k(u)$ as desired.
\end{proof}

\begin{thm} \label{lem:mK_3} \textup{\cite{lorra}}
	For $m\geq 1$ and $n\geq 2$, we have 
	\begin{equation*}
		% R(mK_3,K_4) = 3m+5 \quad\text{and}\quad 
		R(mK_3,nK_4) = \max\{3n+3m+1,4n+2m+1\}.
	\end{equation*}
\end{thm}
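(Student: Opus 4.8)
(This is the theorem of \cite{lorra}; here is how I would approach a proof.) Write $N=\max\{3m+3n+1,\,4n+2m+1\}$.

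For the \emph{lower bound} I would exhibit one explicit colouring for each candidate value of $N-1$. On $K_{3m+3n}$: put a red $K_{3m-1}$ on a set $A$, let the red graph on a disjoint set $B$ of $3n+1$ vertices be the complete bipartite graph $K_{2,3n-1}$, and colour every $A$--$B$ edge blue. Then the red graph is $K_{3m-1}\sqcup K_{2,3n-1}$, which is triangle-free on $B$ and hence has triangle matching number exactly $m-1$, so there is no red $mK_3$; and since $A$ is blue-independent, every blue $K_4$ meets $B$ in at least three mutually-blue vertices, i.e.\ in a triangle of the blue graph on $B$, which is the complement $K_2\sqcup K_{3n-1}$ of $K_{2,3n-1}$ and has only $n-1$ disjoint triangles, so there is no blue $nK_4$. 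On $K_{4n+2m}$ I would use the dual construction: a blue $K_{4n-1}$ on a set $C$, a disjoint set $D$ of $2m+1$ vertices whose red graph is $K_{2m-1}\sqcup 2K_1$ (so the blue graph on $D$ is $K_4$-free), and every $C$--$D$ edge red; since each red triangle uses at least two vertices of the $K_{2m-1}$ part of $D$ one gets red triangle matching number $m-1$, while the blue graph $K_{4n-1}\sqcup(\text{$K_4$-free on }D)$ has blue $K_4$-matching number $n-1$. Both colourings work for all $m,n\ge 1$, so $R(mK_3,nK_4)\ge N$.

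For the \emph{upper bound} I would induct on $m+n$, fixing a colouring of $K_N$. When $m>n$ one has $N=3m+3n+1=R((m-1)K_3,nK_4)+3$; so if the red graph contains a triangle, delete its three vertices and apply the inductive hypothesis, and if not, the red graph is triangle-free on $N\ge 4(n-1)+9$ vertices, so using $R(3,4)=9$ one greedily extracts $n$ disjoint red-independent $4$-sets, i.e.\ a blue $nK_4$. When $n>m$ the mirror argument applies: $N=4n+2m+1=R(mK_3,(n-1)K_4)+4$, so either a blue $K_4$ can be deleted and the hypothesis applied, or the blue graph is $K_4$-free, in which case the red graph has independence number $\le 3$ on $N\ge 3(m-1)+9$ vertices and $R(3,4)=9$ yields a red $mK_3$ greedily. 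The small base cases ($m=1$, for which $R(K_3,nK_4)=4n+3$ follows by a separate induction on $n$ anchored at $R(K_3,2K_4)=11$, and $n=2$) would be treated directly.

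The case $m=n$ is where this collapses and is the \textbf{main obstacle}: here $N=6n+1$, and deleting a red $K_3$, a blue $K_4$, or one of each leaves $6n-2$, $6n-3$ or $6n-6$ vertices, in every case exactly one short of the Ramsey number one wants to invoke, so there is no slack. I see two routes. The first is a stability argument: after deleting a red triangle $T$, the remaining $K_{6n-2}$ has no red $(n-1)K_3$ and no blue $nK_4$ on $R((n-1)K_3,nK_4)-1$ vertices, hence is essentially one of the extremal colourings above and in particular carries a blue clique $C$ on about $4n$ vertices; one then argues that either some vertex of $T$ is blue-adjacent to the three vertices of $C$ left over by a packing of $C$ into $n-1$ blue $K_4$'s (producing a blue $nK_4$), or the vertices of $T$ have enough red neighbours in $C$ to supply the missing red triangles. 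The second, more self-contained route: since there is no blue $nK_4$, Lemma~\ref{lem:fu} (with $k=n-1$, $u=2n+5$) gives a red matching of size $\lfloor(3n+2)/2\rfloor$, and a defect-Hall argument that uses the roughly $3n$ vertices outside the matching as apexes—assisted by the Corr\'adi--Hajnal theorem on the densest induced red subgraph—upgrades $n$ of the matching edges to vertex-disjoint red triangles, the failure of which would expose a common blue neighbourhood dense enough to contain the last blue $K_4$. In both routes the real work lies in arguing with the fine structure of triangle-free and $K_4$-free graphs on few vertices rather than with a crude greedy count, and the same difficulty reappears in pinning down the base values $R(K_3,2K_4)=11$ and $R(2K_3,2K_4)=13$ by hand.
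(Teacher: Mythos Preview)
The paper does not prove this statement: Theorem~\ref{lem:mK_3} is quoted from \cite{lorra} without proof and is used only as an input to the proof of Theorem~\ref{main}. There is therefore nothing in the paper to compare your plan against.

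As a standalone sketch your outline is sound and follows the expected induct-on-$m+n$ template. Your two lower-bound colourings check out, and the inductive reductions in the off-diagonal cases $m>n$ and $n>m$ are essentially complete. The genuine gap is exactly where you place it: the diagonal case $m=n$, where neither of the two routes you propose (stability analysis of near-extremal colourings, or a red-matching plus Corr\'adi--Hajnal argument) is carried past the level of a heuristic. One cheaper observation you stop just short of: if a red $K_3$ and a blue $K_4$ share a vertex, deleting their union removes only six vertices and (for $n\ge 3$) leaves exactly $6n-5=R\bigl((n-1)K_3,(n-1)K_4\bigr)$, so the induction closes at once; the residual work in that line is to force such an intersecting pair whenever both a red triangle and a blue $K_4$ are present, which is a local argument rather than the global structural machinery you invoke.
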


Theorem \ref{lem:mK_3} can be explicitly written for positive integers $m$ and $k$ as 
$$R\left(mK_3, (k+1)K_4\right) = \begin{cases} 3k+3m+4, & k< m-1, \\ 4k+2m+5, & k\geq m-1.\end{cases}$$

Now that we have all the necessary tools we need,
we will proceed to prove the main result.

\begin{thm} \label{main}
	$R_2(c(nK_4)) = 13n-3$ for $n \geq 3$.
	Furthermore, if $R_2(2K_4) \leq 23$ then $R_2(c(2K_4)) = 23$.
\end{thm}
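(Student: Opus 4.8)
The lower bound $R_2(c(nK_4))\ge 13n-3$ (and $R_2(c(2K_4))\ge 23$) being already established, I would prove the matching upper bound. So fix a red/blue colouring of $K_N$ with $N=13n-3$ and assume, toward a contradiction, that no monochromatic connected $nK_4$ occurs. A $2$-colouring of a complete graph cannot have both colour classes disconnected, so I may take the blue graph connected and spanning; $n$ disjoint blue $K_4$'s would then lie in a single connected blue subgraph, so the blue graph has at most $n-1$ disjoint $K_4$'s. Let $C_1,\dots,C_p$ be the red components, and for each $i$ let $a_i$ be the largest number of disjoint red $K_4$'s inside $C_i$; since such a matching is connected in red, $a_i\le n-1$ for all $i$.

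I would first dispose of $p=1$ (which includes the case that both colour classes are connected). Delete a maximum family of disjoint red $K_4$'s from $K_N$; its complement $U$ satisfies $|U|=N-4a_1\ge 9n+1$ and, by maximality, induces no red $K_4$, so every $K_4$ in $U$ is blue. Greedily extracting blue $K_4$'s from $U$ while $R(4,4)=18$ vertices remain produces $\lfloor(9n-17)/4\rfloor+1$ of them, which is at least $n$ once $n\ge 3$; this contradicts the assumption. (For $n=2$ this yields only one blue $K_4$, which is exactly why $R_2(2K_4)\le 23$ is assumed there: it gives a monochromatic $2K_4$ in $K_{23}$, already connected since both colour classes are.)

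Now suppose $p\ge 2$, so the blue graph contains the complete multipartite graph with parts $C_1,\dots,C_p$, and the aim is to produce $n$ disjoint blue $K_4$'s, using that a blue $K_4$ may be built from disjoint blue cliques of total size $4$ lying in at most four distinct parts (two blue edges in two parts; a blue triangle in one part plus a vertex in another; four vertices in four parts; a blue $K_4$ inside one part), since cross-part edges are blue. For each $i$ delete a maximum red $K_4$-matching from $C_i$, leaving $U_i\subseteq C_i$ with no internal red $K_4$ and $|U_i|=|C_i|-4a_i$: Lemma~\ref{lem:fu} (colours interchanged) gives $\ge f_{a_i}(|U_i|)$ disjoint blue edges in $C_i$, Theorem~\ref{lem:mK_3} gives blue triangles inside the larger parts, and $R(4,4)=18$ gives blue $K_4$'s inside any large $U_i$. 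If $\sum_i a_i<2n-3+\lfloor(n-1)/4\rfloor$, then $\bigcup_i U_i$ has more than $4n+10$ vertices and no red $K_4$ inside any part, and the blue cliques these pockets supply --- matched across parts when $p\ge 4$, spent as blue edges and triangles when $p\le 3$ --- assemble into $n$ disjoint blue $K_4$'s. If $\sum_i a_i\ge 2n-3+\lfloor(n-1)/4\rfloor$, Lemma~\ref{lem:2part} applied to the nonzero $a_i$'s splits the components into three groups each carrying $\ge\lceil n/4\rceil$ disjoint red $K_4$'s, or into two groups each carrying $\ge n-2$. All edges between distinct groups (and between the union $G_0$ of the $a_i=0$ components and every group) are blue, so blue $K_4$'s of the shape ``one vertex from each of three groups plus one from $G_0$'' already give $\min$ of the group sizes and $|G_0|$ copies, while blue $K_4$'s of the shape ``a blue edge in one group plus a vertex in each of two others'', counted below via the $f_{a_i}$'s, make up the shortfall; summing the capacity inequalities confirms the arithmetic has room (its ceiling is $\lfloor N/4\rfloor\ge 3n-1$).

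The real obstacle is this last subcase at its extreme: $p\le 3$, $\sum_i a_i$ large, $G_0$ small, and the $U_i$ small --- colourings within one vertex of the extremal configuration (three near-balanced red cliques plus roughly $n-1$ red-isolated vertices), whose $(13n-4)$-vertex version has precisely $n-1$ disjoint blue $K_4$'s. There no blue $K_4$ can use four singletons in four parts, so every blue $K_4$ must pay a blue edge or triangle out of a part, and the $f_{a_i}$ counts vanish on the near-clique parts; one must therefore follow the extra vertex: pushed into a red clique it raises some $a_i$ to $n$ (a red connected $nK_4$); pushed into the independent part it raises $|G_0|$, hence the blue $K_4$-packing, to $n$; attached by at most two red edges to a red clique while staying blue to the rest of it, it raises that $|U_i|$ from $3$ to $\ge 4$, whence Lemma~\ref{lem:fu} (noting $f_{a_i}(3)=0<1=f_{a_i}(4)$) yields one extra blue edge there, hence one extra blue $K_4$. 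In every case the number of disjoint blue $K_4$'s moves from $n-1$ to $n$. Making this case distinction exhaustive over all near-extremal distributions of the $a_i$'s, with Lemmas~\ref{lem:2part} and \ref{lem:fu} and Theorem~\ref{lem:mK_3} furnishing the exact bounds, is the technical core; and for $n=2$ the places where the $n\ge 3$ estimates above fail are precisely those covered by $R_2(2K_4)\le 23$, which completes the proof.
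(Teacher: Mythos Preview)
Your overall strategy coincides with the paper's (with the colours interchanged): assume the connected colour has fewer than $n$ disjoint $K_4$'s, count monochromatic $K_4$'s across the components of the other colour, group those components via Lemma~\ref{lem:2part}, and assemble $K_4$'s of the connected colour from small pieces (edges, triangles, singletons) drawn from different groups, with Lemma~\ref{lem:fu} and Theorem~\ref{lem:mK_3} supplying the pieces. Your $p=1$ case is fine. From $p\ge 2$ on, however, the proposal is a sketch rather than a proof, and several essential verifications are missing.

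First, the case $\sum_i a_i<2n-3+\lfloor(n-1)/4\rfloor$ is vacuous: since the connected (blue) colour has at most $n-1$ disjoint $K_4$'s and greedy extraction using $R(4,4)=18$ yields at least $3n-4+\lfloor(n-1)/4\rfloor$ disjoint monochromatic $K_4$'s in total, one always has $\sum_i a_i\ge 2n-3+\lfloor(n-1)/4\rfloor$; the paper observes this and never considers the other range. More seriously, in the main case you never verify the arithmetic that actually drives the proof. After grouping via Lemma~\ref{lem:2part} one must (i) show that Lemma~\ref{lem:fu}, applied to each \emph{group} $U_j$ with parameter $k_j=\sum_{C_i\subseteq U_j}a_i$ (not to the individual $C_i$), supplies at least $n$ blue edges in total (or $n-|G_0|$ in your variant), and (ii) check that each group retains enough leftover vertices to serve as the singletons. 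Both steps require $k_j\le n-1$, which can fail when a group contains several components; the paper handles this in a separate subcase by peeling the excess off into a fourth part $U_4'$ and redoing the count with target $n-|U_4'|$. You do not address this at all. The two-group outcome of Lemma~\ref{lem:2part} is precisely where Theorem~\ref{lem:mK_3} is invoked --- one shows a group is large enough to force an $nK_3$ of the connected colour, then completes each triangle with a vertex from the other group --- and this case is absent from your write-up beyond a passing mention. Finally, the ``follow the extra vertex'' paragraph is stability intuition about one particular near-extremal colouring, not an argument valid for arbitrary colourings; the paper never argues this way, and no such perturbation step is needed once the $f_{k_j}$-inequalities and the $nK_3$ case are properly established.
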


\begin{proof}
	We consider $G$ a $2$-edge-coloring of $K_{13n-3}$ with red and blue. 
	We will show that there is a monochromatic $c(nK_4)$ in $G$.
	Since a graph and its complement cannot be both disconnected, at least one color class of $G$ is connected, we suppose that the red is connected. 
	
	If $G$ contains a red $nK_4$ or a blue $c(nK_4)$, then we are done. 
	So, we may assume that there are at most $n-1$ disjoint red copies of $K_4$ and no blue $c(nK_4)$. 
	
	Since $13n-3 = 18+4(3n-5)+n-1 = R(4,4)+4(3n-5)+n-1$, the graph $G$ has at least $3n-4+\left \lfloor{\frac{n-1}{4}}\right \rfloor$ disjoint monochromatic copies of $K_4$. 
	Since there are at most $n-1$ disjoint red copies of $K_4$, there are at least $3n-4+\left \lfloor{\frac{n-1}{4}}\right \rfloor-(n-1) = 2n-3 + \left \lfloor{\frac{n-1}{4}}\right \rfloor$ disjoint blue copies of $K_4$.
	
	Let $k$ be the maximum number of disjoint blue copies of $K_4$ in $G$, and $K$ be the set of such $k$ disjoint blue copies of $K_4$.
	We note from the above discussion that $k\geq 2n-3 + \left \lfloor{\frac{n-1}{4}}\right \rfloor$.
	Suppose that blue connected components (including singletons) have vertex sets $V_1,V_2,\ldots,V_t,\ldots,V_{t+s}$.
	We assume that, for $1 \leq i \leq t$, the graph $G[V_i]$ contains $a_i > 0$ blue copies of $K_4$ in $K$, while $G[V_{t+1}],\ldots,G[V_{t+s}]$ contains no blue $K_4$.
	Let $V'=V_{t+1} \cup \cdots \cup V_{t+s}$.
	Since there is no blue $c(nK_4)$ in $G$, we have $a_i\leq n-1$.
	
	Next, we show that $t\geq 2$. Suppose $t=1$, we have
	$$ n-1\geq a_1 = k \geq 2n-3 + \left \lfloor{\frac{n-1}{4}}\right \rfloor. $$
	If $n>2$, the inequality becomes $n-1 \geq 2n-3 + \left \lfloor{\frac{n-1}{4}}\right \rfloor > n-1$, which is a contradiction.
	If $n=2$, then $k=1$. We have $G$ has only one disjoint blue $K_4$.
	As we assume that $R_2(2K_4) \leq 23$, we can conclude from $n(G) = 23 \geq R_2(2K_4)$ that $G$ must contain a red $2K_4$, which is a contradiction.
	This is the only case that $R_2(2K_4) \leq 23$ is required.
	From now on, we will prove this theorem by omitting the case when $n=2$ and $k=1$.
	
	By Lemma \ref{lem:2part}, $a_1,a_2,\ldots,a_t$ can be combined into three parts where each part is at least $\left \lceil{\frac{n}{4}}\right \rceil$ or two parts where each part is at least $n-2$.
	We will show a contradiction that there exists a red $nK_4$ in $G$.
	
	\noindent \textbf{Case 1:} $a_1,a_2,\ldots,a_t$ can be combined into three parts where each part is at least $\left \lceil{\frac{n}{4}}\right \rceil$
	
	Since $G[V_i]$ contains a blue $a_iK_4$, we can combine $V_1,V_2,\ldots,V_t$, and $V'$ into three sets, says $U_1,U_2$ and $U_3$, where $G[U_i]$ has $k_i \geq \left \lceil{\frac{n}{4}}\right \rceil$ blue copies of $K_4$ from $K$ and $u_i$ be the number of vertices in $U_i$ that do not belong to any of blue $K_4$ in $K$. We will split the proof into two subcases.
	
	\textbf{Case 1.1:} $k_i \leq n-1$ for all $1 \leq i \leq 3$.    
	Recall that the edges between $U_1, U_2$ and $U_3$ are all red.
	In this subcase, we will form a red $K_4=K_2\vee K_1 \vee K_1$ in $G$ by joining a red $K_2$ (from one component) and two copies of $K_1$ (one from each of the remaining two components). 
	To obtain a red $nK_4$ in $G$, we require the total number of red copies of $K_2$ from each component to be at least $n$.
	By Lemma \ref{lem:fu}, $G[U_i]$ contains at least $f_{k_i}(u_i)$ red copies of $K_2$.
	First, we have to show that
	\begin{equation*} 
		f_{k_1}(u_1)+f_{k_2}(u_2)+f_{k_3}(u_3) \geq n.
	\end{equation*}
	We can assume that $k_1 \geq k_2 \geq k_3$.
	Since $u_1+u_2+u_3= 13n-3-4k$ and $f_{k_1}(u_1)+f_{k_2}(u_2)+f_{k_3}(u_3) \geq f_{k_1}(3)+f_{k_2}(3)+f_{k_3}(13n-9-4k)$,
	the total number of disjoint red copies of $K_2$ in $G[U_1]\cup G[U_2] \cup G[U_3]$ is at least $f_{k_3}(13n-9-4k)$.
	Since $k_i \leq n-1$ and $k=k_1+k_2+k_3$, we have $$13n-9-4k \geq 13n-9-4(3n-3) = n+3 \geq  k_3+4.$$
	By Lemma \ref{lem:fu}, we have 
	\[ f_{k_3}(13n-9-4k) = \left \lfloor{\frac{13n-11-4k+k_3}{2}}\right \rfloor. \]
	Since $k_i \leq n-1$ and $k = k_1+k_2+k_3$, we have
	\[13n-11-4k+k_3 = 13n-11-4k_1-4k_2-3k_3 \geq 13n-11-11(n-1) \geq 2n.\]
	Hence, $f_{k_1}(u_1)+f_{k_2}(u_2)+f_{k_3}(u_3) \geq f_{k_3}(13n-9-4k) \geq n.$
	
	We pick $m_i \leq f_{k_i}(u_i)$ red copies of $K_2$ in $G[U_i]$ so that $m_1+m_2+m_3 = n$.
	Therefore, we have 
	\begin{equation*}
		|U_i| = 4k_i+u_i \geq 4k_i+m_i \geq n+m_i = (m_1+m_2+m_3)+m_i.
	\end{equation*}
	So there exists red $m_iK_2 + \sum_{j\neq i} m_j K_1$ in $G[U_i]$ for $1\leq i \leq 3$.
	Hence, we can construct a red $nK_4$ in $G$.
	
	\textbf{Case 1.2:} $k_i > n-1$ for some $1 \leq i \leq 3$.
	First, for any $1 \leq i \leq 3$, we will partition $U_i$ into two sets say $U_i'$ and $U_i''$ (possibly empty), where the number of blue $K_4$ from $K$ in $G[U_i']$ is from $\left \lceil{\frac{n}{4}}\right \rceil$ to $n-1$.
	We let $U_4' = U_1'' \cup U_2'' \cup U_3''$.
	Note that $U_4' \neq \emptyset$ as $k_i > n-1$ for some $i$.
	Then $U_1',U_2',U_3'$ and $U_4'$ form a new partition of $V(G)$. For $1 \leq i \leq 4$, we let $k_i'$ be the number of disjoint blue $K_4$ from $K$ and $u_i'$ be the number of vertices in $U_i'$ that do not belong to any of blue $K_4$ in $K$.
	We have $\left \lceil{\frac{n}{4}}\right \rceil \leq k_1',k_2',k_3' \leq n-1$.
	
	Note that $|U_i'| \geq n$ for $1 \leq i \leq 3$.
	If $|U_4'| \geq n$, then we can construct a red $nK_4$ by joining one vertex from each component.
	From now on, we can assume that $|U_4'| < n$.
	
	From Lemma \ref{lem:fu}, each component $G[U_i']$, where $1 \leq i \leq 3$, contains at least $f_{k_i'}(u_i')$ red copies of $K_2$. First, we want to show that 
	\begin{equation*} 
		f_{k_1'}(u_1')+f_{k_2'}(u_2')+f_{k_3'}(u_3') \geq n - |U_4'|.
	\end{equation*}
	Since $u_1'+u_2'+u_3'+u_4' = 13n-3-4k$, we get $u_1'+u_2'+u_3'= 13n-3-4k-u_4'$.
	Let $n' = 13n-3-4k-u_4'=13n-3-4(k_1'+k_2'+k_3')-|U_4'|$. From $k_1',k_2',k_3' \leq n-1$, we have $n' \geq n-|U_4'|+9$. 
	We can assume that $k_1' \geq k_2' \geq k_3'$.
	Since $f_{k_1'}(u_1')+f_{k_2'}(u_2')+f_{k_3'}(u_3') \geq f_{k_1'}(3)+f_{k_2'}(3)+f_{k_3'}(n'-6) = f_{k_3'}(n'-6)$, the total number of disjoint red copies of $K_2$ in $G[U_1']\cup G[U_2'] \cup G[U_3']$ is at least $f_{k_3'}(n'-6)$.
	
	By Lemma \ref{lem:fu},
	if $n'-6 \leq k_3'+3$, then $f_{k_3'}(n'-6) = n'-9 \geq n - |U_4'|$.
	If $n'-6 \geq k_3'+4$, then we have
	\[ f_{k_3'}(n'-6) = \left \lfloor{\frac{n'+k_3'-8}{2}}\right \rfloor \geq \frac{n'+k_3'-9}{2} = \frac{13n-4k+k_3'-u_4'-12}{2}.  \]
	Since $n-1 \geq k_1'\geq k_2'\geq k_3', k=k_1'+k_2'+k_3'+k_4'$ and $|U_4'| = 4k_4'+u_4'$, we have
	\begin{align*}
		13n-4k+k_3'-u_4'-12 
		&= 13n-4k_1'-4k_2'-3k_3'-4k_4'-u_4'-12 \\
		&\geq 13n-11(n-1)-4k_4'-u_4'-12 \\
		&= 2 n-|U_4'| - 1 \\
		&= 2 (n-|U_4'|) + |U_4'|-1 \\
		&\geq 2 (n-|U_4'|).
	\end{align*}
	Thus, $f_{k_3'}(n'-6) \geq n-|U_4'|$. Therefore, 
	$$f_{k_1'}(u_1')+f_{k_2'}(u_2')+f_{k_3'}(u_3') \geq f_{k_3'}(n'-6) \geq n - |U_4'|.$$
	
	Hence, there are at least $n-|U_4'|$ red copies of $K_2$ in $G[U_1'] \cup G[U_2'] \cup G[U_3']$. 
	
	Similarly, we pick $m_i \leq f_{k_i'}(u_i')$ red copies of $K_2$ in each $G[U_i']$ so that $m_1+m_2+m_3 = n-|U_4'|$.
	
	In addition, for $1\leq i \leq 3$, we have
	\begin{equation*}
		|U_i'| = 4k_i'+u_i' \geq 4k_i'+m_i \geq n+m_i = (m_1+m_2+m_3)+m_i+|U_4'|.
	\end{equation*}
	
	Denote $|U_4'|$ by $m_4$.
	There exists $m_iK_2 + \sum_{j\neq i} m_j K_1$ in $G[U_i']$ for $1 \leq i \leq 3$ and $m_4K_1$ in $G[U_4']$.
	Therefore, we get a red $nK_4$ in $G$.
	
	\textbf{Case 2:} $a_1,a_2,\ldots,a_t$ can be combined into two parts where each part is at least $n-2$.    
	
	We assume that $a_1,a_2,\ldots,a_t$ cannot be combined into three parts where each part is at least $\left \lceil{\frac{n}{4}}\right \rceil$.
	Similarly, we combine $V_1,V_2,\ldots,V_t$ and $V'$ into two sets, says $U_1$ and $U_2$, where $G[U_i]$ has $k_i \geq n-2$ disjoint blue copies of $K_4$ from $K$ and $u_i$ be the number of vertices in $U_i$ that do not belong to any of blue $K_4$ in $K$. 
	
	Note that, if $k \geq 2n-2+\left \lceil{\frac{n}{4}}\right \rceil$, then $a_3+a_4+\cdots+a_t \geq \left \lceil{\frac{n}{4}}\right \rceil$ as $a_1,a_2 \leq n-1$.
	Thus $a_1,a_2,\ldots,a_t$ can be combined into three parts where each part is at least $\left \lceil{\frac{n}{4}}\right \rceil$, which will be in the first case.
	
	We now consider $k \leq 2n-3+\left \lceil{\frac{n}{4}}\right \rceil$. 
	Therefore, $4k \leq 8n-12+4\left \lceil{\frac{n}{4}}\right \rceil$ and
	we have
	\begin{equation*}
		u_1+u_2 = 13n-3-4k \geq 13n-3-\bigg(8n-12+4\left \lceil{\frac{n}{4}}\right \rceil\bigg) \geq 4n+6.
	\end{equation*}
	We will assume that $u_1 \geq u_2$. Therefore, $u_1 \geq 2n+3$.
	
	\textbf{Case 2.1:} $u_1 \geq 2n+6$.
	By Theorem \ref{lem:mK_3}, we first recall that
	\begin{equation*}
		R\left(nK_3, (k_i+1)K_4\right) = \begin{cases} 3k_i+3n+4, & k_i< n-1, \\ 4k_i+2n+5, & k_i\geq n-1.\end{cases}
	\end{equation*}
	If $k_1 = n-2 < n-1$, then 
	\begin{equation*}
		|U_1| = 4k_1+u_1 \geq 4(n-2)+2n+6 = 3(n-2)+3n+4 = R(nK_3,(k_1+1)K_4).
	\end{equation*}
	Otherwise, if $k_1 \geq n-1$, then 
	\begin{equation*}
		|U_1| = 4k_1+u_1 > 4k_1+2n+5 = R(nK_3,(k_1+1)K_4).
	\end{equation*}
	Since $G[U_1]$ contains at most $k_1$ disjoint blue copies of $K_4$ from $K$, $G[U_1]$ contains a red $nK_3$.
	Since $|U_2| = 4k_2+u_2 > n$, $G[U_2]$ contains a red $nK_1$.
	Hence, we obtain a red $nK_4$ in $G$.
	
	\textbf{Case 2.2:} $2n+3 \leq u_1 \leq 2n+5$.
	Then $u_2 \geq 2n+1$. We then apply Lemma \ref{lem:fu} as follows.
	
	If $u_i \leq k_i+3$, we get
	\begin{equation*}
		f_{k_i}(u_i) = u_i-3 \geq 2n-2 \geq n.
	\end{equation*}
	If $u_i \geq k_i+4$ and $n \geq 3$, we get
	\begin{equation*}
		f_{k_i}(u_i) = \left \lfloor{\frac{u_i+k_i-2}{2}}\right \rfloor \geq \left \lfloor{\frac{2n+k_i-1}{2}}\right \rfloor \geq \left \lfloor{\frac{3n-3}{2}}\right \rfloor \geq n.
	\end{equation*}
	If $u_i \geq k_i+4$ and $n =2$, we get $k_i \geq 1 = n-1$. We then obtain
	\begin{equation*}
		f_{k_i}(u_i) = \left \lfloor{\frac{u_i+k_i-2}{2}}\right \rfloor \geq \left \lfloor{\frac{3n-2}{2}}\right \rfloor \geq n.
	\end{equation*}
	Thus, both $G[U_1]$ and $G[U_2]$ contains a red $nK_2$.
	Hence, we obtain a red $nK_4$ in $G$.
	
	In all cases, there exists a red $nK_4$ in $G$. This finishes the proof.
\end{proof}

\section{Conclusion and discussion}\label{sec:conclusion}
We have shown that $R_2(c(nK_4))=13n-3$ for $n \geq 3$ and the formula still holds for $n=2$ if $R_2(2K_4) \leq 23$.

The exact Ramsey number $R_2(c(nK_r))=(r^2-r+1)n-r+1$ when $r\geq 5$ are known when $n \geq R(r,r)$ \cite{rob}. 
However, this result may not always be true for small values of $n$. 
One example is when $n=2$ and $r=5$. If the result were true, we get $R_2(c(2K_5))=21(2)-4=38 < 43 \leq R(5,5)$, which is impossible.
Our proof relies on the known value $R(4,4)=18$.
However, for $r \geq 5$ the exact value of $R(r,r)$ remain unknown. 
Hence, it is interesting to know the upper bound of $R_2(c(nK_r))$, or in particular $R_2(c(nK_5))$, for small values of $n$.

\bibliographystyle{plain} % We choose the "plain" reference style
\bibliography{refs}

\begin{thebibliography}{10}

\bibitem{burr2}
Stefan~A Burr.
\newblock Ramsey numbers involving graphs with long suspended paths.
\newblock {\em Journal of the London Mathematical Society}, 2(3):405--413, 1981.

\bibitem{burr}
Stefan~A Burr.
\newblock On the ramsey numbers {{$r (G, nH)$}} and {{$r (nG, nH)$}} when n is large.
\newblock {\em Discrete mathematics}, 65(3):215--229, 1987.

\bibitem{burrer}
Stefan~A Burr, P~Erd{\H{o}}s, and Joel~H Spencer.
\newblock Ramsey theorems for multiple copies of graphs.
\newblock {\em Transactions of the American Mathematical Society}, 209:87--99, 1975.

\bibitem{coclo}
Ernest~J Cockayne and Peter~J Lorimer.
\newblock The ramsey number for stripes.
\newblock {\em Journal of the Australian Mathematical Society}, 19(2):252--256, 1975.

\bibitem{fau}
Ralph~J Faudree, Richard~H Schelp, and J~Sheehan.
\newblock Ramsey numbers for matchings.
\newblock {\em Discrete Mathematics}, 32(2):105--123, 1980.

\bibitem{gyasa}
Andr{\'a}s Gy{\'a}rf{\'a}s and G{\'a}bor~N S{\'a}rk{\"o}zy.
\newblock Ramsey number of a connected triangle matching.
\newblock {\em Journal of Graph Theory}, 83(2):109--119, 2016.

\bibitem{lorp}
Peter Lorimer.
\newblock The ramsey numbers for stripes and one complete graph.
\newblock {\em Journal of graph theory}, 8(1):177--184, 1984.

\bibitem{lorra}
Peter~J Lorimer and PR~Mullins.
\newblock Ramsey numbers for quadrangles and triangles.
\newblock {\em Journal of Combinatorial Theory, Series B}, 23(2-3):262--265, 1977.

\bibitem{lorse}
Peter~J Lorimer and RJ~Segedin.
\newblock Ramsey numbers for multiple copies of complete graphs.
\newblock {\em Journal of Graph Theory}, 2(1):89--91, 1978.

\bibitem{radz}
Stanislaw Radziszowski.
\newblock Small ramsey numbers.
\newblock {\em The electronic journal of combinatorics}, 1000:DS1--Aug, 2011.

\bibitem{rob}
Barnaby Roberts.
\newblock Ramsey numbers of connected clique matchings.
\newblock {\em Electronic Journal of Combinatorics}, 24(1), 2017.

\end{thebibliography}

\end{document}